\newtheorem{theorem}{Theorem}[section]
\newtheorem{lemma}[theorem]{Lemma}
\theoremstyle{definition}
\newtheoremstyle{named}{}{}{\itshape}{}{\bfseries}{.}{.5em}{\thmnote{#3's }#1} \theoremstyle{named} 
\theoremstyle{remark}
\newtheorem{remark}[theorem]{Remark}
\numberwithin{equation}{section}
\numberwithin{equation}{section}
\title{Existence of a lower bound for the distance between point masses of relative equilibria in spaces of constant curvature}
\author{Pieter Tibboel$^\ast$}
\begin{document}
\maketitle
\begin{abstract}
  We prove that if for the curved $n$-body problem the masses are given, the minimum distance between the point masses of a specific type of relative equilibrium solution to that problem has a universal lower bound that is not equal to zero. We furthermore prove that the set of all such relative equilibria is compact. This class of relative equilibria includes all relative equilibria of the curved $n$-body problem in $\mathbb{S}^{2}$, $\mathbb{H}^{2}$ and a significant subset of the relative equilibria for $\mathbb{S}^{3}$ and $\mathbb{H}^{3}$.
\end{abstract}
\begin{description}

\item \hspace*{3.8mm}$\ast$ Department of Mathematics, City University of
Hong Kong, Hong Kong. \\
Email: \texttt{ptibboel@cityu.edu.hk}

\end{description}

\newpage

\section{Introduction. }
  By $n$-body problems, we mean problems where we want to find the dynamics of $n$ point particles. By relative equilibria, we mean solutions to such problems where the point particles represent rotating configurations of fixed size and shape. The \textit{$n$-body problem in spaces of constant curvature}, or \textit{curved $n$-body problem} is an extension of the Newtonian $n$-body problem (in Euclidean space) into spaces of nonzero, constant Gaussian curvature, which means that the space is either spherical (if the curvature is positive), or hyperbolical (if the curvature is negative) (see \cite{DPS1}, \cite{DPS2} and \cite{DPS3}). It was noted in \cite{D4} and \cite{DK} that it suffices to consider the case that the curvature is equal to either $+1$, or $-1$. More precisely, following \cite{DPS1}, \cite{DPS2}, \cite{DPS3}, \cite{D3} and \cite{DK}, if we define the space
  \begin{align*}
    \mathbb{M}_{\sigma}^{k}=\{(x_{1},....,x_{k+1})\in\mathbb{R}^{k+1}|x_{1}^{2}+...+x_{k}^{2}+\sigma x_{k+1}^{2}=\sigma\},
  \end{align*}
  where $\sigma$ equals either $+1$, or $-1$
  and for $x$, $y\in\mathbb{M}_{\sigma}^{k}$ define the inner product
  \begin{align*}
    x\odot_{k} y=x_{1}y_{1}+...+x_{k}y_{k}+\sigma x_{k+1}y_{k+1},
  \end{align*}
we mean the problem of finding the dynamics of $n$ point particles with respective masses $m_{1}$,..., $m_{n}$ and coordinates $q_{1}$,..., $q_{n}\in\mathbb{M}_{\sigma}^{k}$, $k\geq 2$, as described by the system of differential equations
\begin{align}\label{EquationsOfMotion}
   \ddot{q}_{i}=\sum\limits_{j=1,\textrm{ }j\neq i}^{n}\frac{m_{j}(q_{j}-\sigma(q_{i}\odot_{k} q_{j})q_{i})}{(\sigma-\sigma(q_{i}\odot_{k} q_{j})^{2})^{\frac{3}{2}}}-\sigma(\dot{q}_{i}\odot_{k}\dot{q}_{i})q_{i},\textrm{ }i\in\{1,...,n\}.
\end{align}
The first to investigate $n$-body problems for spaces of constant curvature were Bolyai \cite{BB} and Lobachevsky \cite{Lo}, who independently proposed a curved 2-body problem in hyperbolic space $\mathbb{H}^{3}$ in the 1830s. Since then, $n$-body problems for spaces of constant curvature have been studied by mathematicians such as Dirichlet, Schering \cite{S1}, \cite{S2}, Killing \cite{K1}, \cite{K2}, \cite{K3}, Liebmann \cite{L1}, \cite{L2}, \cite{L3} and more recently Kozlov and Harin \cite{KH}. However, the study of $n$-body problems in spaces of constant curvature for the case that $n\geq 2$ started with \cite{DPS1}, \cite{DPS2}, \cite{DPS3}  by Diacu, P\'erez-Chavela and Santoprete. After this breakthrough, additional results for the $n\geq 2$ case were then obtained by Cari\~nena, Ra\~nada, Santander \cite{CRS}, Diacu \cite{D1}, \cite{D2}, \cite{D3}, Diacu, Kordlou \cite{DK}, Diacu, P\'erez-Chavela \cite{DP}. For a more detailed historical overview, please see \cite{D2}, \cite{D3}, \cite{D4}, \cite{DK}, or \cite{DPS1}.

M. Shub proved for the Newtonian $n$-body problem that if we fix the masses and the angular velocity (i.e. the speed with which the angle of the rotation changes), the set of possible relative equilibria is compact and as a direct consequence that there exists a universal nonzero lower bound for the distance between the point particles of the relative equilibria in such a set (see \cite{Shub}). Shub's results were a potential first step in what may lead to a proof of the famous sixth Smale problem (see \cite{Smale}) which states that such sets are not only compact, but, in fact, finite.

  In this paper, following Shub's line of thought, we will make a first attempt at investigating to which extent we can extend his results to the constant curvature case. More specifically, for
\begin{align*}
  T(t)=\begin{pmatrix}
    \cos(t) & -\sin(t) \\
    \sin(t) & \cos(t)
  \end{pmatrix}
\end{align*}
 a $2\times 2$ rotation matrix, $A>0$, $Q_{1}$,..., $Q_{n}\in\mathbb{R}^{2}$ and $Z\in\mathbb{R}^{k-1}$ constant, if we call any solution $q_{1}$,..., $q_{n}$ of (\ref{EquationsOfMotion}) of the form
\begin{align}\label{Expression relative equilibrium}
  q_{i}(t)=\begin{pmatrix}T(At)Q_{i}\\ Z\end{pmatrix}
\end{align}
a \textit{relative equilibrium} and $A$ its \textit{angular velocity}, then we will prove that if  $\|\cdot\|_{k}$ is the Euclidean norm on $\mathbb{R}^{k}$ that
\begin{theorem}\label{Main Theorem}
  There exists a universal constant $C>0$ such that for any relative equilibrium solution  of (\ref{EquationsOfMotion}) $\|q_{i}-q_{j}\|_{k}>C$ for all \\$i$, $j\in\{1,...,n\}$, $i\neq j$ if the masses $m_{1}$,..., $m_{n}$ are given.
\end{theorem}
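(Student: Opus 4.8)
The plan is to argue by contradiction, blowing up the configuration at the collision scale and reducing to an impossible Newtonian equilibrium condition. First I would insert (\ref{Expression relative equilibrium}) into (\ref{EquationsOfMotion}). Since $\ddot T(t)=-T(t)$ and $T(t),\dot T(t)$ are orthogonal matrices, one gets $\dot q_i\odot_k\dot q_i=A^{2}\|Q_i\|_2^{2}$ and, after multiplying the first two coordinates of the equation by $T(-At)$, a purely algebraic system. Two structural facts fall out: the constraint $q_i\odot_k q_i=\sigma$ forces $\|Q_i\|_2=:r$ to be the \emph{same} for all $i$; and $c_{ij}:=q_i\odot_k q_j$ is constant, with $c_{ij}=\sigma-\tfrac12\|q_i-q_j\|_k^{2}$ and $\|q_i-q_j\|_k=\|Q_i-Q_j\|_2$. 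Writing $D_{ij}:=(\sigma-\sigma c_{ij}^{2})^{1/2}>0$, the first two coordinates become
\begin{align*}
 -A^{2}\sigma(\sigma-r^{2})\,Q_i=\sum_{j\neq i}\frac{m_j\,(Q_j-\sigma c_{ij}Q_i)}{D_{ij}^{3}},
\end{align*}
while the remaining coordinates (equivalently, the inner product of this equation with $Q_i$, when $r^{2}\neq\sigma$) yield the scalar identities $\sigma A^{2}r^{2}=\sum_{j\neq i}m_j D_{ij}^{-3}(1-\sigma c_{ij})$, $i=1,\dots,n$. When $Z\neq0$, substituting these back gives the reduced equations $-A^{2}Q_i=\sum_{j\neq i}m_j D_{ij}^{-3}(Q_j-Q_i)$, whence $\sum_i m_i Q_i=0$.

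Now suppose the theorem fails, so that for the given masses (and, following Shub, the given angular velocity $A$) there are relative equilibria along which $\epsilon:=\min_{i\neq j}\|q_i-q_j\|_k\to0$; after a subsequence the minimising pair is $(1,2)$. From $1-\sigma c_{12}=\tfrac{\sigma}{2}\epsilon^{2}$ and $D_{12}^{2}=\tfrac14\epsilon^{2}(4-\sigma\epsilon^{2})$, the $j=2$ term of the scalar identity for $i=1$ has magnitude $\sim m_2/(2\epsilon)$, and since every term of that sum has the sign of $\sigma$ there is no cancellation, so $|A^{2}r^{2}|\to\infty$. As $A$ is fixed this forces $r\to\infty$ — which only the hyperbolic case even permits, and which I would rule out thus: the unit vectors $Q_i/r$ have barycentre $0$, hence cannot all converge to one point, so they split into classes with mutual distances $\sim r\to\infty$; summing the reduced equations over one class (internal forces cancel by antisymmetry) equates a vector of norm $\sim A^{2}r M_{\mathrm{class}}\to\infty$ with one of norm $O(r^{-5})$, a contradiction. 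Hence $r$ stays in a compact subset of $(0,\infty)$.

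For the main step set $\widehat Q_i:=\epsilon^{-1}(Q_i-Q_1)$ and $\mathcal C:=\{\,i:\ \|q_i-q_1\|_k/\epsilon\ \text{stays bounded}\,\}$; after a further subsequence $\widehat Q_i\to\widehat Q_i^{\,*}$ for $i\in\mathcal C$, with $\widehat Q_1^{\,*}=0$, $2\in\mathcal C$, and — since every rescaled mutual distance is $\geq1$ — the points $\{\widehat Q_i^{\,*}:i\in\mathcal C\}$ pairwise distinct. Multiplying the displayed $\mathbb R^{2}$-equation for $i\in\mathcal C$ by $\epsilon^{2}$: the terms with $j\in\mathcal C$ converge to $\sum_{j\in\mathcal C,\,j\neq i}m_j\|\widehat Q_i^{\,*}-\widehat Q_j^{\,*}\|^{-3}(\widehat Q_j^{\,*}-\widehat Q_i^{\,*})$ (because there $c_{ij}\to\sigma$ and $D_{ij}^{3}\sim\|q_i-q_j\|_k^{3}$); the terms with $j\notin\mathcal C$ are $O\!\bigl(m_j(\epsilon/\|q_i-q_j\|_k)^{2}\bigr)\to0$ since $\|q_i-q_j\|_k/\epsilon\to\infty$; and the left side is $O(\epsilon^{2}A^{2}|\sigma-r^{2}|\,r)\to0$ because $A$ and $r$ are bounded. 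Thus, for every $i\in\mathcal C$,
\begin{align*}
 \sum_{j\in\mathcal C,\ j\neq i}\frac{m_j\,(\widehat Q_j^{\,*}-\widehat Q_i^{\,*})}{\|\widehat Q_i^{\,*}-\widehat Q_j^{\,*}\|^{3}}=0 .
\end{align*}
Taking the inner product with $m_i\widehat Q_i^{\,*}$ and summing over $i\in\mathcal C$, the left side symmetrises to $-\sum_{\{i,j\}\subset\mathcal C}m_i m_j\,\|\widehat Q_i^{\,*}-\widehat Q_j^{\,*}\|^{-1}$, which is strictly negative because $|\mathcal C|\geq2$ and all of these distances are finite and positive — contradicting that the sum vanishes. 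This proves the theorem; the degenerate subcase $Z=0$ in $\mathbb S^{k}$, where $A$ drops out of the equations, is handled verbatim since only the $\mathbb R^{2}$-equations were used, and compactness of the set of relative equilibria follows by combining these bounds with the (now uniform) continuity of the force. The steps I expect to be genuinely delicate are the uniform control of the curvature and far-body contributions in the blow-up and the a priori exclusion of $r\to\infty$; the remainder is elementary algebra relating $c_{ij}$, $D_{ij}$ and $\|q_i-q_j\|_k$.
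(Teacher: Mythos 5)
Your strategy (rescale the collapsing cluster and contradict a limiting Newtonian equilibrium, in the spirit of Shub) is genuinely different from the paper's, which never leaves the tangential balance (\ref{c}); but as written it does not prove Theorem~\ref{Main Theorem}. The theorem fixes only the masses, so the constant $C$ must be uniform over all angular velocities, and the paper's proof deliberately rests on identity (\ref{c}) alone, in which $A$ does not occur, exactly to obtain that uniformity. You instead declare $A$ given (``following Shub'') and use this at the two load-bearing places: deducing $r\to\infty$ from $A^{2}r^{2}\to\infty$, and making the rescaled left-hand side $O(\epsilon^{2}A^{2}|\sigma-r^{2}|\,r)$ vanish because ``$A$ and $r$ are bounded.'' If $A$ is allowed to vary along the sequence, as the statement requires, neither step survives: on the sphere, for instance, $A_{p}^{2}r_{p}^{2}\to\infty$ can be carried by $A_{p}\to\infty$ with $r_{p}\to 0$, and nothing controls $\epsilon_{p}^{2}A_{p}^{2}$. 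So what you prove is a Shub-type statement with masses \emph{and} angular velocity given (the hypotheses of Theorem~\ref{Main Corollary}), not Theorem~\ref{Main Theorem}.

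There is also a quantitative gap inside your own scheme: the claim that the terms with $j\notin\mathcal{C}$ are $O\bigl(m_{j}(\epsilon/\|q_{i}-q_{j}\|_{k})^{2}\bigr)$ is false for $\sigma=+1$. The actual denominator is $D_{ij}^{3}$ with $D_{ij}^{2}=\sigma-\sigma(q_{i}\odot_{k}q_{j})^{2}$, and on the sphere $D_{ij}\to 0$ when a far body becomes nearly antipodal to $q_{i}$ (i.e.\ $q_{i}\odot_{k}q_{j}\to-1$ while $\|q_{i}-q_{j}\|_{k}$ stays near $2r$), so such a term can remain large even after multiplication by $\epsilon^{2}$. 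This matters precisely where your blow-up is needed: when $Z\neq 0$ and $A$ is fixed, the scalar identity of Lemma~\ref{Lemma for corollary} already gives the contradiction outright (indeed your first step, $A^{2}r^{2}\to\infty$ together with the exclusion of $r\to\infty$, ends the proof there, making the blow-up redundant), so the rescaling argument only carries the degenerate spherical case $r^{2}=\sigma$, $Z=0$ — exactly the great-circle configurations in which near-antipodal far pairs occur and your far-field bound breaks down. To repair it you would need to keep far pairs uniformly away from antipodality (a lower bound on $2-\sigma r^{2}(1-\cos(\alpha_{i}-\alpha_{j}))$) or treat the spherical far field by a separate argument.
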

and
\begin{theorem}\label{Main Corollary}
  If we write any set of vectors $Q_{1}$,..., $Q_{n}\in\mathbb{R}^{2}$ of a relative equilibrium solution $q_{1}$,..., $q_{n}$ as a $2n$-dimensional vector
  \begin{align*}
     \begin{pmatrix}
       Q_{1}\\
       \vdots\\
       Q_{n}
     \end{pmatrix},
   \end{align*}
   then the set of all such $2n$-dimensional vectors, for fixed masses $m_{1}$,..., $m_{n}$ and angular velocity $A$, is compact in $\mathbb{R}^{2n}$.
\end{theorem}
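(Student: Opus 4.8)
The plan is to rewrite the relative-equilibrium condition as a finite system of algebraic equations in $(Q_{1},\dots,Q_{n},Z)$, deduce boundedness of the admissible $Q$-vectors from Theorem \ref{Main Theorem}, and then obtain closedness by passing to the limit in these equations; the only delicate issue will be excluding antipodal pairs in the limit in the spherical case.

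\textbf{Reduction.} Substituting (\ref{Expression relative equilibrium}) into (\ref{EquationsOfMotion}) and using that $T(s)$ is orthogonal with $\ddot T(s)=-T(s)$, one checks that $\mu_{ij}:=q_{i}\odot_{k}q_{j}=Q_{i}\cdot Q_{j}+c$ is time-independent, where $c:=Z_{1}^{2}+\dots+Z_{k-2}^{2}+\sigma Z_{k-1}^{2}$ depends only on $Z$; that $\|q_{i}-q_{j}\|_{k}=\|Q_{i}-Q_{j}\|_{2}$; and that, after cancelling the invertible factor $T(At)$ in the first two coordinates, (\ref{EquationsOfMotion}) is equivalent to
\begin{align*}
  \text{(I)}&\colon\quad A^{2}(\sigma\rho^{2}-1)\,Q_{i}=\sum_{j\neq i}\frac{m_{j}\bigl(Q_{j}-\sigma\mu_{ij}Q_{i}\bigr)}{(\sigma-\sigma\mu_{ij}^{2})^{3/2}},\qquad i=1,\dots,n,\\
  \text{(II)}&\colon\quad \sigma A^{2}\rho^{2}\,Z=\Bigl(\sum_{j\neq i}\frac{m_{j}(1-\sigma\mu_{ij})}{(\sigma-\sigma\mu_{ij}^{2})^{3/2}}\Bigr)Z,\qquad i=1,\dots,n,
\end{align*}
together with the constraint $\|Q_{i}\|_{2}^{2}=\sigma-c$ for all $i$ and the nondegeneracy condition $\sigma(1-\mu_{ij}^{2})>0$ for all $i\neq j$. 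In particular all $\|Q_{i}\|_{2}$ share a common value $\rho\geq 0$, $c=\sigma-\rho^{2}$, and the set in the statement is exactly the projection to the $Q$-coordinates of the solution set of (I)--(II) subject to the constraint and nondegeneracy. Note also $|\mu_{ij}-\sigma|=\tfrac12\|Q_{i}-Q_{j}\|_{2}^{2}$.

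\textbf{Boundedness.} If $\sigma=1$ the constraint reads $\rho^{2}=1-(Z_{1}^{2}+\dots+Z_{k-1}^{2})\leq 1$. If $\sigma=-1$ then $c=-1-\rho^{2}<0$ forces $Z\neq 0$, so (II) becomes the scalar identity $-A^{2}\rho^{2}=\sum_{j\neq i}m_{j}(1+\mu_{ij})(\mu_{ij}^{2}-1)^{-3/2}$ for each $i$; writing $s_{ij}:=\tfrac12\|Q_{i}-Q_{j}\|_{2}^{2}=\tfrac12\|q_{i}-q_{j}\|_{k}^{2}$ one has $\mu_{ij}=-1-s_{ij}$ and this reads $A^{2}\rho^{2}=\sum_{j\neq i}m_{j}\,s_{ij}^{-1/2}(s_{ij}+2)^{-3/2}$. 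Theorem \ref{Main Theorem} gives $s_{ij}>C^{2}/2$, whence $\rho^{2}<(2A^{2}C)^{-1}\sum_{j}m_{j}$. Thus in both cases $\|Q_{i}\|_{2}$ is bounded by a constant depending only on $m_{1},\dots,m_{n}$ and $A$, so the admissible $Q$-vectors lie in a fixed ball of $\mathbb{R}^{2n}$.

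\textbf{Closedness.} Let $(Q^{(l)},Z^{(l)})$ be relative equilibria with $Q^{(l)}\to Q^{\infty}$; set $\rho_{\infty}=\|Q_{1}^{\infty}\|_{2}$ (all $\|Q_{i}^{\infty}\|_{2}$ agree), $c_{\infty}=\sigma-\rho_{\infty}^{2}$, $\mu_{ij}^{\infty}=\lim_{l}\mu_{ij}^{(l)}$. By Theorem \ref{Main Theorem}, $\|Q_{i}^{\infty}-Q_{j}^{\infty}\|_{2}\geq C>0$, so $Q^{\infty}$ is collision-free; since $|\mu_{ij}^{\infty}-\sigma|=\tfrac12\|Q_{i}^{\infty}-Q_{j}^{\infty}\|_{2}^{2}\geq\tfrac12 C^{2}$, this already gives $|\mu_{ij}^{\infty}|>1$ when $\sigma=-1$ and $\mu_{ij}^{\infty}<1$ when $\sigma=1$. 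It remains, for $\sigma=1$, to exclude $\mu_{ij}^{\infty}=-1$; since $\mu_{ij}\geq 1-2\rho^{2}$ this would force $\rho^{(l)}\to 1$. If $\rho^{(l)}<1$ along a subsequence, contracting (I) with $m_{i}Q_{i}$, summing over $i$, and dividing by $1-\rho^{2}>0$ yields $A^{2}\rho^{2}\sum_{i}m_{i}=\sum_{i\neq j}m_{i}m_{j}(1-\mu_{ij})^{-1/2}(1+\mu_{ij})^{-3/2}$; every summand on the right is positive and the offending pair contributes a term tending to $+\infty$, contradicting the bounded left-hand side. If instead $\rho^{(l)}=1$ along a subsequence (a great-circle configuration with $c=0$ and $\mu_{ij}=\cos(\phi_{j}-\phi_{i})$ in angular coordinates), contracting (I) with the $90^{\circ}$-rotate $Q_{i}^{\perp}$ gives $\sum_{j\neq i}m_{j}\sin(\phi_{j}-\phi_{i})\,|\sin(\phi_{j}-\phi_{i})|^{-3}=0$, which again blows up as a pair approaches antipodal while, by Theorem \ref{Main Theorem}, the remaining summands stay bounded. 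Hence $Q^{\infty}$ is nondegenerate. Finally choose any $Z^{\infty}$ with $Z_{1}^{2}+\dots+Z_{k-2}^{2}+\sigma(Z_{k-1})^{2}=c_{\infty}$ (possible, and automatically nonzero when $\sigma=-1$). Since (I) depends continuously on $Q$ on the nondegenerate region, and the scalar multiplier in (II) tends to $\sigma A^{2}\rho_{\infty}^{2}$ along the sequence (always if $\sigma=-1$; if $\sigma=1$ and $\rho_{\infty}<1$ then $Z^{(l)}\neq 0$ eventually so that multiplier equals $A^{2}(\rho^{(l)})^{2}$, while if $\rho_{\infty}=1$ then $c_{\infty}=0$ forces $Z^{\infty}=0$ and (II) is vacuous), passing to the limit shows $(Q^{\infty},Z^{\infty})$ satisfies (I), (II) and the constraint. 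Hence $q_{i}(t)=\begin{pmatrix}T(At)Q_{i}^{\infty}\\ Z^{\infty}\end{pmatrix}$ is a relative equilibrium, $Q^{\infty}$ belongs to the set, and the set is closed; being bounded as well, it is compact.

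\textbf{Main obstacle.} Boundedness is essentially immediate: on $\mathbb{S}^{k}$ the constraint already bounds $\rho$, and on $\mathbb{H}^{k}$ the fact that $Z\neq 0$ turns (II) into a scalar identity to which Theorem \ref{Main Theorem} applies directly. The real work is the closedness step, and within it the exclusion of antipodal configurations in the limit on the sphere; this is where the precise algebraic form of (I) must be exploited, in particular the sign structure that makes the summands in the contracted identities nonnegative, so that a single blowing-up term cannot be cancelled by the others.
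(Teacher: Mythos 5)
Your proposal is correct, and it actually does more than the paper's own argument, with only partial overlap. The paper treats ``not compact'' as ``there is a sequence of relative equilibria with $r_{p}=\|Q_{ip}\|\to\infty$'' (possible only for $\sigma=-1$), applies Lemma~\ref{Lemma for corollary} to get the scalar identity of (\ref{compactness trick}), and argues that blow-up of the left-hand side forces some $q_{ip}\odot_{k}q_{jp}\to-1$, i.e.\ a collision, contradicting Theorem~\ref{Main Theorem}; your boundedness step is exactly this mechanism made quantitative (the bound $\rho^{2}<(2A^{2}C)^{-1}\sum_{j}m_{j}$), together with the trivial bound $\rho\le 1$ on the sphere. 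Where you genuinely depart from the paper is the closedness step: the paper never shows that a limit of admissible $Q$-vectors is again realized by a relative equilibrium with the same $A$, and in particular never excludes the antipodal singularity $\mu_{ij}\to-1$ on $\mathbb{S}^{k}$, which the collision bound of Theorem~\ref{Main Theorem} does not by itself rule out. Your two contracted identities supply precisely this missing ingredient: the virial-type identity $A^{2}\rho^{2}\sum_{i}m_{i}=\sum_{i\neq j}m_{i}m_{j}(1-\mu_{ij})^{-1/2}(1+\mu_{ij})^{-3/2}$ (all summands positive) when $\rho^{(l)}<1$, and the tangential identity, which is just (\ref{c}) with $r=1$ and $\sigma=1$, when $\rho^{(l)}=1$; and your observation that only $c=\sigma-\rho^{2}$, not $Z^{(l)}$ itself, needs to converge (so any $Z^{\infty}$ realizing $c_{\infty}$ works) is the right way to handle the $Z$-coordinates. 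The one place to be slightly more explicit is the $\rho^{(l)}=1$ subcase: the reason only a single summand can blow up is that two bodies both limiting to the antipode of body $i$ would approach each other, violating Theorem~\ref{Main Theorem}; that is what your citation of the theorem is implicitly doing, but it deserves a sentence, since otherwise terms of opposite sign could conceivably cancel. With that small clarification, your proof is complete and, unlike the paper's, covers both the boundedness and the closedness halves of compactness.
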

\begin{remark}
  Note that the definition of a relative equilibrium used in Theorem~\ref{Main Theorem} and Theorem~\ref{Main Corollary} includes all relative equilibria of the $n$-body problem in $\mathbb{M}_{\sigma}^{2}$ and a subclass of the positive elliptic relative equilibria in $\mathbb{S}^{3}$ as defined in \cite{D3} and a subclass of the negative elliptic relative equilibria in $\mathbb{H}^{3}$ as defined in \cite{D3}, which are two out of all four possible classes of relative equilibria in $\mathbb{M}_{\sigma}^{3}$ (see \cite{D3}).
\end{remark}

 We will first formulate two lemmas, which will be done in section~\ref{section lemma}, that are  related to Criterion~1 in \cite{D2} and then use those lemmas to prove Theorem~\ref{Main Theorem} in section~\ref{Proof of Main Theorem} and Theorem~\ref{Main Corollary} in section~\ref{Proof of Main Corollary}.
\section{Background theory}\label{section lemma}
In order to formulate the aforementioned lemmas we need for the proofs of Theorem~\ref{Main Theorem} and Theorem~\ref{Main Corollary}, we need to introduce some notation:\\
Let $m\in\mathbb{N}$. Let $\langle\cdot,\cdot\rangle_{m}$ be the Euclidean inner product on $\mathbb{R}^{m}$.
Let $i$, $j\in\{1,...,n\}$. Let
\begin{align*}
    q_{1}(t)=\begin{pmatrix}
      T(At)Q_{1}\\Z
    \end{pmatrix},...,\textrm{ }q_{n}(t)=\begin{pmatrix}
      T(At)Q_{n}\\Z
    \end{pmatrix}
  \end{align*}
  be a relative equilibrium, define $r:=\|Q_{i}\|$ for all $i\in\{1,...,n\}$ and let $\alpha_{i}$ be the angle between $Q_{i}$ and the first coordinate axis.
Then the first lemma we will need is:
\begin{lemma}
  Let $q_{1}$,..., $q_{n}$ be a relative equilibrium solution as in (\ref{Expression relative equilibrium}). Then
  \begin{align}\label{c}
    0=\sum\limits_{j=1, j\neq i}^{n}\frac{m_{j}\sin{(\alpha_{i}-\alpha_{j})}}{(1-\cos{(\alpha_{i}-\alpha_{j})})^{\frac{3}2}(2-\sigma r^{2}(1-\cos{(\alpha_{i}-\alpha_{j})}))^{\frac{3}{2}}}
  \end{align}
\end{lemma}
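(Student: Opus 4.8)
The plan is to substitute the relative-equilibrium ansatz (\ref{Expression relative equilibrium}) directly into the equations of motion (\ref{EquationsOfMotion}), reduce the resulting identity to a time-independent one in $\mathbb{R}^{2}$, and then take the Euclidean inner product with the single direction in the rotating plane that annihilates every term proportional to $q_{i}$; what remains is exactly (\ref{c}).

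First I would record the elementary consequences of the ansatz. Since $T(At)$ is orthogonal and the vector $Z$ is common to all the particles, the constraint $q_{i}\odot_{k}q_{i}=\sigma$ forces $\|Q_{i}\|$ to be independent of $i$, so $r$ is well defined; moreover, writing $Z=(Z',z_{k-1})$, one finds $q_{i}\odot_{k}q_{j}=\langle Q_{i},Q_{j}\rangle_{2}+\|Z'\|^{2}+\sigma z_{k-1}^{2}=\sigma-r^{2}(1-\cos(\alpha_{i}-\alpha_{j}))$. A short computation, carried out separately for $\sigma=+1$ and $\sigma=-1$ and then reconciled, simplifies the denominator to $\sigma-\sigma(q_{i}\odot_{k}q_{j})^{2}=r^{2}(1-\cos(\alpha_{i}-\alpha_{j}))(2-\sigma r^{2}(1-\cos(\alpha_{i}-\alpha_{j})))$. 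On the left-hand side, $\ddot q_{i}$ has plane part $-A^{2}T(At)Q_{i}$ and vanishing $Z$-block, while $\dot q_{i}\odot_{k}\dot q_{i}=A^{2}\|Q_{i}\|^{2}=A^{2}r^{2}$, so the curvature term equals $-\sigma A^{2}r^{2}q_{i}$.

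Next I would split the equation into its $\mathbb{R}^{2}$-block and its $Z$-block. In the plane block, multiplying through by $T(-At)$ removes the time dependence and yields $-A^{2}Q_{i}=\sum_{j\neq i}m_{j}\bigl(Q_{j}-\sigma(q_{i}\odot_{k}q_{j})Q_{i}\bigr)/D_{ij}-\sigma A^{2}r^{2}Q_{i}$, where $D_{ij}:=r^{3}(1-\cos(\alpha_{i}-\alpha_{j}))^{3/2}(2-\sigma r^{2}(1-\cos(\alpha_{i}-\alpha_{j})))^{3/2}$. Taking the Euclidean inner product of this identity with $Q_{i}^{\perp}$, the $90^{\circ}$-rotation of $Q_{i}$, kills every term containing $Q_{i}$; since $\langle Q_{j},Q_{i}^{\perp}\rangle_{2}=-r^{2}\sin(\alpha_{i}-\alpha_{j})$, dividing out the common factor $-r^{-1}$ produces precisely (\ref{c}).

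The computation is essentially routine; the only points requiring care are the bookkeeping of the $\odot_{k}$ product (checking that the common $Z$ makes $r$ well defined and that the denominator formula is uniform in $\sigma$), and the tacit non-degeneracy facts that along a genuine relative equilibrium one has $q_{i}\odot_{k}q_{j}\neq\pm1$ for $i\neq j$, so that both $1-\cos(\alpha_{i}-\alpha_{j})\neq0$ and $2-\sigma r^{2}(1-\cos(\alpha_{i}-\alpha_{j}))\neq0$ and the fractions are defined, and that $r>0$, so the plane block is non-trivial. I would flag these where they are used but not belabor them, since admissibility of the configuration as a solution of (\ref{EquationsOfMotion}) already guarantees them.
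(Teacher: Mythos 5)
Your proposal is correct and follows essentially the same route as the paper: substitute the ansatz into (\ref{EquationsOfMotion}), undo the rotation with $T(-At)$, take the Euclidean inner product of the planar block with $\begin{pmatrix}0 & -1\\ 1 & 0\end{pmatrix}Q_{i}$ to annihilate all terms proportional to $Q_{i}$, and simplify the denominator via $q_{i}\odot_{k}q_{j}=\sigma-r^{2}(1-\cos(\alpha_{i}-\alpha_{j}))$. Your added remarks (that the common $Z$ makes $r$ well defined, and the non-degeneracy conditions ensuring the fractions are defined) are tacit in the paper but correct and harmless.
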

\begin{proof}
  This lemma is a direct consequence of Criterion~1 in \cite{T2}, but the proof for our case is very short, which is why it has been added here regardless:

  Inserting our expressions for $q_{1}$,..., $q_{n}$ into (\ref{EquationsOfMotion}), using that $(T(At))''=-A^{2}T(At)$ and that $(T(At))'=AT(At)\begin{pmatrix}
    0 & -1\\ 1 & 0
  \end{pmatrix}$, gives
  \begin{align}\label{To prove the lemma Identity 1}
    \begin{pmatrix}
      -A^{2}T(At)Q_{i}\\
      \overrightarrow{0}
    \end{pmatrix}&=\sum\limits_{j=1,\textrm{ }j\neq i}^{n}\frac{m_{j}\left(\begin{pmatrix}
      T(At)Q_{j}\\ Z
    \end{pmatrix}-\sigma(q_{i}\odot_{k}q_{j})\begin{pmatrix}
      T(At)Q_{i}\\ Z
    \end{pmatrix}\right)}{(\sigma-\sigma(q_{i}\odot_{k}q_{j})^{2})^{\frac{3}{2}}}\nonumber\\
    &-\sigma(\dot{q}_{i}\odot_{k}\dot{q}_{i})\begin{pmatrix}
      T(At)Q_{i}\\ Z
    \end{pmatrix},\textrm{ }i\in\{1,...,n\},
  \end{align}
  where $\overrightarrow{0}\in\mathbb{R}^{k-2}$. Writing out the identities for the first two coordinates of the vectors of (\ref{To prove the lemma Identity 1}) gives
  \begin{align}\label{To prove the lemma Identity 2}
    -A^{2}T(At)Q_{i}&=\sum\limits_{j=1,\textrm{ }j\neq i}^{n}\frac{m_{j}\left(T(At)Q_{j}-\sigma(q_{i}\odot_{k} q_{j})T(At)Q_{i}\right)}{(\sigma-\sigma(q_{i}\odot_{k}q_{j})^{2})^{\frac{3}{2}}}\nonumber\\
    &-\sigma(\dot{q}_{i}\odot_{k}\dot{q}_{i})T(At)Q_{i},\textrm{ }i\in\{1,...,n\}.
  \end{align}
  Multiplying both sides of (\ref{To prove the lemma Identity 2}) with $(T(At))^{-1}$ and consequently taking inner products at both sides with
  \begin{align*}
    \begin{pmatrix}
      0 & -1 \\
      1 & 0
    \end{pmatrix}Q_{i}
  \end{align*}
  gives
  \begin{align*}
    0=\sum\limits_{j=1,\textrm{ }j\neq i}^{n}\frac{m_{j}\langle Q_{j},\begin{pmatrix}
      0 & -1 \\
      1 & 0
    \end{pmatrix}Q_{i}\rangle_{2}}{(\sigma-\sigma(q_{i}\odot_{k}q_{j})^{2})^{\frac{3}{2}}},\textrm{ }i\in\{1,...,n\},
  \end{align*}
  which can be rewritten as
  \begin{align}\label{To prove the lemma Identity 3}
    0=\sum\limits_{j=1,\textrm{ }j\neq i}^{n}\frac{m_{j}\|Q_{j}\|\|Q_{i}\|\sin{(\alpha_{j}-\alpha_{i})}}{(\sigma-\sigma(\|Q_{j}\|\|Q_{i}\|\cos{(\alpha_{j}-\alpha_{i})}+Z\odot_{k-2}Z)^{2})^{\frac{3}{2}}},\textrm{ }i\in\{1,...,n\}.
  \end{align}
  Using that $\sigma=q_{i}\odot_{k} q_{i}=\|Q_{i}\|_{2}^{2}+Z\odot_{k-2}Z$ and that $\|Q_{i}\|=\|Q_{j}\|=r$ allows us to rewrite (\ref{To prove the lemma Identity 3}) as
  \begin{align*}
    0=\sum\limits_{j=1,\textrm{ }j\neq i}^{n}\frac{m_{j}r^{2}\sin{(\alpha_{j}-\alpha_{i})}}{(\sigma-\sigma(r^{2}\cos{(\alpha_{j}-\alpha_{i})}+\sigma-r^{2})^{2})^{\frac{3}{2}}},\textrm{ }i\in\{1,...,n\},
  \end{align*}
  which means that
  \begin{align*}
    0=\sum\limits_{j=1,\textrm{ }j\neq i}^{n}\frac{m_{j}\sin{(\alpha_{j}-\alpha_{i})}}{(1-\cos{(\alpha_{j}-\alpha_{i})})^{\frac{3}{2}}(2-\sigma r^{2}(1-\cos{(\alpha_{j}-\alpha_{i})})^{2})^{\frac{3}{2}}},\textrm{ }i\in\{1,...,n\},
  \end{align*}
  which completes the proof.
\end{proof}
\begin{lemma}\label{Lemma for corollary}
  For any relative equilibrium solution to (\ref{EquationsOfMotion})
  \begin{align*}\begin{pmatrix}T(At)Q_{i}\\Z\end{pmatrix},\textrm{ }i\in\{1,...,n\}\end{align*}
  we have that if $Z\neq 0$, then
  \begin{align*}
    \sigma A^{2}r^{2}=\sum\limits_{j=1,\textrm{ }j\neq i}^{n}\frac{m_{j}\left(1-\sigma(q_{i}\odot_{k}q_{j})\right)}{(\sigma-\sigma(q_{i}\odot_{k}q_{j})^{2})^{\frac{3}{2}}},\textrm{ }i\in\{1,...,n\}.
  \end{align*}
\end{lemma}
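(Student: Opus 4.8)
The plan is to recover the identity by projecting the equations of motion onto the coordinate directions occupied by $Z$ — i.e.\ the final $k-1$ coordinates of $\mathbb{R}^{k+1}$ — rather than onto the rotating plane, which is what produced (\ref{c}) in the previous lemma. Concretely, I would again insert the ansatz (\ref{Expression relative equilibrium}) into (\ref{EquationsOfMotion}), obtaining (\ref{To prove the lemma Identity 1}). In the first two coordinates this gives (\ref{To prove the lemma Identity 2}); in the remaining coordinates, however, both $q_{j}$ and $q_{i}$ contribute the \emph{same} constant vector $Z$, while the left-hand side contributes $\overrightarrow{0}$ because $Z$ is independent of $t$. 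Reading off those coordinates therefore yields
\begin{align*}
  \overrightarrow{0}=\left(\sum_{j=1,\, j\neq i}^{n}\frac{m_{j}\bigl(1-\sigma(q_{i}\odot_{k}q_{j})\bigr)}{(\sigma-\sigma(q_{i}\odot_{k}q_{j})^{2})^{\frac{3}{2}}}-\sigma(\dot{q}_{i}\odot_{k}\dot{q}_{i})\right)Z,\qquad i\in\{1,\dots,n\}.
\end{align*}

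It then remains to evaluate $\dot{q}_{i}\odot_{k}\dot{q}_{i}$. Differentiating (\ref{Expression relative equilibrium}) and using $(T(At))'=AT(At)\begin{pmatrix}0&-1\\ 1&0\end{pmatrix}$ gives
\begin{align*}
  \dot{q}_{i}=\begin{pmatrix}AT(At)\begin{pmatrix}0&-1\\ 1&0\end{pmatrix}Q_{i}\\ \overrightarrow{0}\end{pmatrix};
\end{align*}
since the last coordinates vanish and both $T(At)$ and $\begin{pmatrix}0&-1\\ 1&0\end{pmatrix}$ are orthogonal $2\times 2$ matrices, $\dot{q}_{i}\odot_{k}\dot{q}_{i}=A^{2}\|Q_{i}\|^{2}=A^{2}r^{2}$. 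Substituting this into the previous display and using the hypothesis $Z\neq 0$ to divide the vanishing vector by $Z$, the scalar prefactor must itself vanish, which is exactly the assertion
\begin{align*}
  \sigma A^{2}r^{2}=\sum_{j=1,\, j\neq i}^{n}\frac{m_{j}\bigl(1-\sigma(q_{i}\odot_{k}q_{j})\bigr)}{(\sigma-\sigma(q_{i}\odot_{k}q_{j})^{2})^{\frac{3}{2}}},\qquad i\in\{1,\dots,n\}.
\end{align*}

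I do not expect a genuine analytic obstacle in this argument: the whole proof is bookkeeping with the orthogonal block decomposition $\mathbb{R}^{k+1}=\mathbb{R}^{2}\oplus\mathbb{R}^{k-1}$, together with the elementary observation that an orthogonal conjugate of the rotation ansatz has constant speed. The one step that genuinely uses a hypothesis is the last one: without $Z\neq 0$ the displayed vector identity carries no information, which is precisely why the statement is restricted to that case. The only routine care needed is to note that every denominator $(\sigma-\sigma(q_{i}\odot_{k}q_{j})^{2})^{3/2}$ is well defined for a genuine solution (as $q_{i}\neq\pm q_{j}$ there), and that — unlike in the previous lemma — no division by $r$ is performed, so no nondegeneracy of $r$ is needed. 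An equivalent but longer derivation would instead take the Euclidean inner product of (\ref{To prove the lemma Identity 2}) with $Q_{i}$ and combine the result with (\ref{c}); the vertical projection above is cleaner and makes the role of $Z\neq 0$ transparent.
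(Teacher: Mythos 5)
Your argument is correct and is essentially the paper's own proof: both project the equations of motion onto the last $k-1$ coordinates (where every term is a scalar multiple of $Z$), use $Z\neq 0$ to strip off the common factor $Z$, and substitute $\dot{q}_{i}\odot_{k}\dot{q}_{i}=A^{2}r^{2}$. Your explicit verification of that last identity is a small addition the paper leaves implicit, but the route is the same.
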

\begin{proof}
  Because of (\ref{To prove the lemma Identity 1}), we have that
  \begin{align*}
      \overrightarrow{0}=\sum\limits_{j=1,\textrm{ }j\neq i}^{n}\frac{m_{j}\left(Z-\sigma(q_{i}\odot_{k}q_{j})Z\right)}{(\sigma-\sigma(q_{i}\odot_{k}q_{j})^{2})^{\frac{3}{2}}}-\sigma(\dot{q}_{i}\odot_{k}\dot{q}_{i})Z,\textrm{ }i\in\{1,...,n\},
  \end{align*}
  which can be rewritten as
  \begin{align}\label{To prove the second lemma Identity 1}
      \sigma(\dot{q}_{i}\odot_{k}\dot{q}_{i})Z=\sum\limits_{j=1,\textrm{ }j\neq i}^{n}\frac{m_{j}\left(Z-\sigma(q_{i}\odot_{k}q_{j})Z\right)}{(\sigma-\sigma(q_{i}\odot_{k}q_{j})^{2})^{\frac{3}{2}}},\textrm{ }i\in\{1,...,n\}.
  \end{align}
  Because $Z\neq 0$, there has to be at least one nonzero entry of $Z$, so if we divide the identity in (\ref{To prove the second lemma Identity 1}) for that entry by that entry, we get
  \begin{align}\label{To prove the second lemma Identity 2}
      \sigma(\dot{q}_{i}\odot_{k}\dot{q}_{i})=\sum\limits_{j=1,\textrm{ }j\neq i}^{n}\frac{m_{j}\left(1-\sigma(q_{i}\odot_{k}q_{j})\right)}{(\sigma-\sigma(q_{i}\odot_{k}q_{j})^{2})^{\frac{3}{2}}},\textrm{ }i\in\{1,...,n\}.
  \end{align}
  Because $\dot{q}_{i}\odot_{k}\dot{q}_{i}=A^{2}r^{2}$, this proves the lemma.
\end{proof}
\section{Proof of Theorem~\ref{Main Theorem}}\label{Proof of Main Theorem}
\begin{proof}
  Assume that the contrary is true. Then there exist sequences

  $\{Q_{ip}\}_{p=1}^{\infty}\subset\mathbb{R}^{2}$, $i=1,...,n$, with respective sequences of relative equilibria \begin{align*}\left\{\begin{pmatrix}T(A_{p}t)Q_{ip}\\Z_{p}\end{pmatrix}\right\}_{p=1}^{\infty},\textrm{ }i\in\{1,...,n\}\end{align*}
  for which, after renumbering the
  \begin{align*}
    \begin{pmatrix}T(A_{p}t)Q_{ip}\\Z_{p}\end{pmatrix}
  \end{align*}
  in terms of $i$ if necessary, there exists an $l\in\{1,...,n\}$, such that
  \begin{align*}
    \begin{pmatrix}T(A_{p}t)Q_{1p}\\Z_{p}\end{pmatrix},...,\textrm{ }\begin{pmatrix}T(A_{p}t)Q_{lp}\\Z_{p}\end{pmatrix}
  \end{align*}
  go to the same limit for $p$ going to infinity. \\
  For each of those $p$, we have because of Lemma~\ref{c} that
  \begin{align}\label{partytime}
    0=\sum\limits_{j=1, j\neq i}^{n}\frac{m_{j}\sin{(\alpha_{ip}-\alpha_{jp})}}{(1-\cos{(\alpha_{ip}-\alpha_{jp})})^{\frac{3}2}(2-\sigma r_{p}^{2}(1-\cos{(\alpha_{ip}-\alpha_{jp})}))^{\frac{3}{2}}},
  \end{align}
  where $\alpha_{ip}$ and $\alpha_{jp}$ are the angles between the first coordinate axis and $Q_{ip}$ and the angle between the first coordinate axis and $Q_{jp}$ respectively and $r_{p}=\|Q_{ip}\|$. \\
  Because of (\ref{partytime}), we thus get that
  \begin{align}\label{partytime2}
    0&=\sum\limits_{j=2}^{l}\frac{m_{j}\sin{(\alpha_{1p}-\alpha_{jp})}}{(1-\cos{(\alpha_{1p}-\alpha_{jp})})^{\frac{3}2}(2-\sigma r_{p}^{2}(1-\cos{(\alpha_{1p}-\alpha_{jp})}))^{\frac{3}{2}}}\nonumber\\
    &+\sum\limits_{j=l+1}^{n}\frac{m_{j}\sin{(\alpha_{1p}-\alpha_{jp})}}{(1-\cos{(\alpha_{1p}-\alpha_{jp})})^{\frac{3}2}(2-\sigma r_{p}^{2}(1-\cos{(\alpha_{1p}-\alpha_{jp})}))^{\frac{3}{2}}}.
  \end{align}
  There are two possibilities:
  \begin{itemize}
    \item[1. ] $\alpha_{1p}-\alpha_{jp}$ goes to zero for $j\in\{1,...,l\}$ and $r_{p}$ is bounded for $p$ going to infinity.
    \item[2. ] $\alpha_{1p}-\alpha_{jp}$ goes to zero for $j\in\{1,...,l\}$ and $r_{p}$ is not bounded for $p$ going to infinity.
  \end{itemize}
  For the first case, note that by l'H\^opital and by renumbering the $\alpha_{ip}$ in terms of $i$ and taking subsequences if necessary such that $\alpha_{1p}-\alpha_{jp}$ decreases to zero for all $j\in\{1,...,l\}$ that
  \begin{align}\label{lhopital}
    \lim\limits_{(\alpha_{1p}-\alpha_{jp})\downarrow 0}\frac{m_{j}\sin{(\alpha_{1p}-\alpha_{jp})}}{1-\cos{(\alpha_{1p}-\alpha_{jp})}}=\lim\limits_{(\alpha_{1p}-\alpha_{jp})\downarrow 0}\frac{m_{j}\cos{(\alpha_{1p}-\alpha_{jp})}}{\sin{(\alpha_{1p}-\alpha_{jp})}}=+\infty,
  \end{align}
  which means that if we take the limit where $p$ goes to infinity on both sides of (\ref{partytime2}), we get that $0=\infty$, which is a contradiction.

  For the second case, the $n$-body problem is defined on $\mathbb{H}^{k}$ and thus $\sigma=-1$. Then multiplying both sides of (\ref{partytime2}) with $r_{p}^{3}$ and noting that for $p$ going to infinity
  \begin{align*}
    \frac{r_{p}^{3}}{(2-\sigma r_{p}^{2}(1-\cos{(\alpha_{1p}-\alpha_{jp})}))^{\frac{3}{2}}}=\frac{r_{p}^{3}}{(2+r_{p}^{2}(1-\cos{(\alpha_{1p}-\alpha_{jp})}))^{\frac{3}{2}}}
  \end{align*}
  does not go to zero, leads, combined with (\ref{lhopital}), to the desired contradiction we got for the first case. This completes the proof.
\end{proof}
\section{Proof of Theorem~\ref{Main Corollary}}\label{Proof of Main Corollary}
Assume that the contrary is true. Then there exist sequences

$\{Q_{ip}\}_{p=1}^{\infty}$, $i\in\{1,...,n\}$ and corresponding relative equilibria
\begin{align*}
q_{ip}=\begin{pmatrix}T(A)Q_{ip}\\Z_{p}\end{pmatrix},\textrm{ }i\in\{1,...,n\}\end{align*}
where $q_{1p}$,..., $q_{np}$ solve (\ref{EquationsOfMotion}), such that $r_{p}:=\|Q_{ip}\|$ goes to infinity for $p$ going to infinity. \\
As consequently, for $p$ large enough, taking subsequences if necessary, $Z_{p}\neq 0$, we have by Lemma~\ref{Lemma for corollary} that
\begin{align}\label{compactness trick}
  \sigma A^{2}r_{p}^{2}=\sum\limits_{j=1,\textrm{ }j\neq i}^{n}\frac{m_{j}\left(1-\sigma(q_{ip}\odot_{k}q_{jp})\right)}{(\sigma-\sigma(q_{ip}\odot_{k}q_{jp})^{2})^{\frac{3}{2}}} ,\textrm{ }i\in\{1,...,n\}.
\end{align}
Letting $p$ go to infinity on both sides of (\ref{compactness trick}) means that the left-hand  side of (\ref{compactness trick}) goes to infinity, which is only possible if the right-hand side of (\ref{compactness trick}) does the same. The right-hand side of (\ref{compactness trick}) can only become infinitely large if for at least one term
\begin{align*}
  \frac{m_{j}\left(1-\sigma(q_{ip}\odot_{k}q_{jp})\right)}{(\sigma-\sigma(q_{ip}\odot_{k}q_{jp})^{2})^{\frac{3}{2}}}
\end{align*}
the denominator goes to zero, which means that $\lim\limits_{p\rightarrow\infty}q_{ip}\odot_{k}q_{jp}=-1$, which means that $q_{ip}$ and $q_{jp}$ have the same limit. This contradicts Theorem~\ref{Main Theorem}.
%\section{Acknowledgements}
%The author is indebted to Florin Diacu and Dan Dai for all their advice.

\end{document}